\documentclass[12pt,twoside]{amsart}
\usepackage{amssymb,amsmath,amsthm, amscd, enumerate, mathrsfs}
\usepackage{graphicx, hhline}
\usepackage[all]{xy}
\usepackage[usenames]{color}
\usepackage{hyperref}
\usepackage{fancyhdr}
\hypersetup{colorlinks=true}

\title{Remarks on the abundance conjecture}
\author{Kenta Hashizume}
\date{2015/10/30, version 0.21}
\keywords{abundance theorem, big boundary divisor, good minimal model, finite generation of adjoint ring}
\subjclass[2010]{Primary 14E30; Secondary 14J35}
\address{Department of Mathematics, Graduate School of Science, 
Kyoto University, Kyoto 606-8502, Japan}
\email{hkenta@math.kyoto-u.ac.jp}



\newtheorem{thm}{Theorem}[section]

\newtheorem{cor}[thm]{Corollary}
\newtheorem{prop}[thm]{Proposition}
\newtheorem{conj}[thm]{Conjecture}

\theoremstyle{definition}
\newtheorem{defn}[thm]{Definition}

\newtheorem*{ack}{Acknowledgments} 
\newtheorem{say}[thm]{}

\newtheorem{case}{Case}

\begin{document}

\maketitle
\begin{abstract}
We prove the abundance theorem for log canonical $n$-folds such that the boundary divisor is big 
assuming the abundance conjecture for log canonical $(n-1)$-folds. 
We also discuss the log minimal model program for log canonical $4$-folds.
\end{abstract}

\tableofcontents

\section{Introduction}\label{sec1}
One of the most important open problems in the minimal model theory for higher-dimensional algebraic varieties 
is the abundance conjecture. 
The three-dimensional case of the above conjecture was completely solved (cf.~ \cite{keel-matsuki-mckernan}  
for log canonical threefolds and \cite{fujino1} for semi log canonical threefolds). 
However, Conjecture \ref{conj1.1} is still open in dimension $\geq 4$. 
In this paper, we deal with the abundance conjecture in relative setting.

\begin{conj}[Relative abundance]\label{conj1.1}
Let $\pi:X \to U$ be a projective morphism of varieties and 
$(X,\Delta)$ be a (semi) log canonical pair. 
If $K_{X}+\Delta$ is $\pi$-nef,  
then it is $\pi$-semi-ample.
\end{conj}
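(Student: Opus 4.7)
Since Conjecture \ref{conj1.1} is open in dimension $\geq 4$, my plan is to attack the special case that the abstract promises: $(X,\Delta)$ log canonical with $\Delta$ big over $U$, assuming the abundance conjecture for log canonical $(n-1)$-folds. First I would perform the standard reductions: by restricting to general fibers and by a relative-to-absolute argument (using a suitably general finite base change or cutting $U$ down by general hyperplane sections) one reduces to the case $U = \operatorname{Spec} \mathbb{C}$ with $X$ projective. I would then take a dlt blow-up, so that without loss of generality $(X,\Delta)$ is dlt; the semi log canonical case should follow from the log canonical case by the gluing theorem of Fujino--Gongyo.

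Next I would exploit that $\Delta$ is big. Write $\Delta \sim_{\mathbb{Q}} A + B$ with $A$ ample and $B \geq 0$, and for small $\varepsilon > 0$ replace $\Delta$ by $\Delta_\varepsilon = (1-\varepsilon)\Delta + \varepsilon B + \varepsilon A$. Then $K_X + \Delta_\varepsilon$ remains nef, $\lfloor \Delta_\varepsilon \rfloor = \lfloor \Delta \rfloor$, and $\Delta_\varepsilon$ contains an honest ample $\mathbb{Q}$-divisor. This ample contribution is what one needs to invoke the Kawamata--Shokurov basepoint-free theorem, provided one first establishes non-vanishing: some positive multiple of $K_X + \Delta$ has a section.

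Non-vanishing and semi-ampleness I would obtain simultaneously by induction on dimension, using adjunction to log canonical centers. Pick an lc center $S \subset \lfloor \Delta \rfloor$ (or, if $\lfloor \Delta \rfloor = 0$, produce one by tie-breaking against the ample part of $\Delta$ until a new lc place appears); adjunction gives an lc pair $(S,\Delta_S)$ of dimension $n-1$ with $(K_X + \Delta)|_S = K_S + \Delta_S$ nef, so by the inductive hypothesis $(K_X + \Delta)|_S$ is semi-ample. One then lifts sections from $S$ to $X$ by Kollár-type injectivity/vanishing applied to the ample perturbation $\Delta_\varepsilon$, and compatibly glues the resulting Iitaka fibrations on all lc centers.

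The main obstacle, as in every known partial result on abundance, is this extension/gluing step. Three technical issues need to be controlled: (i) when $\lfloor \Delta \rfloor$ has several components, the semi-ample fibrations on different lc centers must be shown to factor through a common morphism on their union (a descent argument via the canonical bundle formula of Fujino--Mori); (ii) the extension of sections from the non-klt locus to $X$ must remain effective in the absence of an Ambro-style effective generation statement in dimension $n$; and (iii) one must verify that bigness of $\Delta$ supplies enough ampleness after perturbation to kill the relevant $H^1$ obstruction. Once (i)--(iii) are in place, semi-ampleness of $K_X + \Delta$ follows by applying the basepoint-free theorem to $(X,\Delta_\varepsilon)$.
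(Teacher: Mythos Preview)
Your plan has the right endgame in spirit---adjunction to components of $\lfloor\Delta\rfloor$, induction on dimension, and lifting sections via vanishing all appear in the paper's Case~2---but it is missing the paper's decisive preparatory step, and one of your reductions is not valid.

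First, the reduction to $U=\operatorname{Spec}\mathbb{C}$ does not work: relative semi-ampleness is not detected on general fibers, and cutting $U$ by hyperplanes does not preserve $\dim X$. The paper stays in the relative setting throughout, only reducing to $U$ affine.

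Second, and more importantly, you try to run the extension argument directly on the original model. The paper instead first runs a $(K_X+\Delta-\epsilon\lfloor\Delta\rfloor)$-MMP with scaling over $U$; since this pair is klt with big boundary, the MMP terminates by \cite{bchm}. The key computation (using the length bound on extremal rays and the Cartier index of $K_X+\Delta$, with $\epsilon$ chosen so that $2k\epsilon\dim X/(1-\epsilon)<1$) shows that $K_X+\Delta$ is numerically trivial on every contracted ray, so nefness of $K_X+\Delta$ is preserved and its semi-ampleness is unchanged. After this MMP one lands in one of two clean situations: a Mori fiber space $h:X\to Z$ on which $K_X+\Delta$ is pulled back from $Z$ (handled by restricting to a single component of $\lfloor\Delta\rfloor$ dominating $Z$), or a model where \emph{both} $K_X+\Delta$ and $K_X+\Delta-\epsilon\lfloor\Delta\rfloor$ are nef. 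In the latter case $K_X+\Delta-\epsilon\lfloor\Delta\rfloor$ is already semi-ample by BCHM, which immediately gives base-point-freeness of $l(K_X+\Delta)$ away from $\lfloor\Delta\rfloor$; one then only needs to lift sections from $\lfloor\Delta\rfloor$ via Kawamata--Viehweg vanishing and the slc abundance in dimension $n-1$. This MMP reduction is precisely what dissolves your obstacles (i)--(iii): there is no gluing of Iitaka fibrations across multiple centers, and the ``away from $\lfloor\Delta\rfloor$'' part comes for free from the klt semi-ampleness rather than from an extension theorem.

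Two smaller points: when $\lfloor\Delta\rfloor=0$ the pair is already klt and you are done by BCHM---no tie-breaking is needed or wanted; and your final appeal to ``the basepoint-free theorem for $(X,\Delta_\varepsilon)$'' cannot work as stated, since $\lfloor\Delta_\varepsilon\rfloor=\lfloor\Delta\rfloor\neq 0$ means $(X,\Delta_\varepsilon)$ is not klt.
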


Hacon and Xu \cite{haconxu} proved that Conjecture \ref{conj1.1} for log canonical pairs and 
Conjecture \ref{conj1.1} for semi log canonical pairs are equivalent (see also \cite{fujino-gongyo}). 
If $(X, \Delta)$ is Kawamata log terminal and 
$\Delta$ is big, then Conjecture \ref{conj1.1} follows from the usual 
Kawamata--Shokurov base point free theorem in any dimension. 
This special case of Conjecture \ref{conj1.1} plays a crucial role in \cite{bchm}.
Therefore, it is natural to consider Conjecture \ref{conj1.1} for log canonical 
pairs $(X, \Delta)$ under the assumption that 
$\Delta$ is big. 

In this paper, we prove the following theorem.

\begin{thm}[Main Theorem]\label{thm1.1} 
Assume Conjecture \ref{conj1.1} for log canonical $(n-1)$-folds.
Then Conjecture \ref{conj1.1} holds for any projective morphism $\pi :X \to U$ 
and any log canonical $n$-fold $(X, \Delta)$ 
such that $\Delta$ is a $\pi$-big $\mathbb{R}$-Cartier 
$\mathbb{R}$-divisor. 
\end{thm}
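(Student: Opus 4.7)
My plan is to run an induction on the stratification by non-klt centers: reduce to the dlt case, semi-ampify $(K_X+\Delta)$ on the non-klt locus using the inductive hypothesis via adjunction, and then extend semi-ampleness to all of $X$ using $\pi$-bigness of $\Delta$ together with a base point free theorem for lc pairs.

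First, I would replace $(X,\Delta)$ by a $\mathbb{Q}$-factorial dlt modification $f\colon Y\to X$ with $K_Y+\Delta_Y=f^*(K_X+\Delta)$. Semi-ampleness over $U$ descends and ascends along $f$, and $\Delta_Y$ remains big over $U$ by Kodaira's lemma applied to $f^*\Delta$. So I may assume $(X,\Delta)$ is $\mathbb{Q}$-factorial dlt. If $S:=\lfloor\Delta\rfloor=0$ then $(X,\Delta)$ is klt and $\Delta$ is big, so Kawamata--Shokurov applied in the relative setting concludes; hence I assume $S\neq 0$.

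Next, I would apply adjunction to $S$. The resulting pair $(S,\Delta_S)$ is semi-dlt of dimension $n-1$ with $K_S+\Delta_S=(K_X+\Delta)|_S$, which is nef over $U$. Passing to the normalization $S^{\nu}$ and using Hacon--Xu's equivalence between the semi-log-canonical and log-canonical versions of Conjecture \ref{conj1.1}, the inductive hypothesis yields that $(K_X+\Delta)|_S$ is $\pi$-semi-ample.

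Finally, I would use bigness to produce an effective divisor $E\geq 0$ and a $\pi$-ample $\mathbb{R}$-divisor $A$ with $\Delta\sim_{\mathbb{R},\pi}A+E$, chosen general enough that $E$ shares no components with $\lfloor\Delta\rfloor$. For small $\epsilon>0$ set $\Delta_{\epsilon}=(1-\epsilon)\Delta+\epsilon E$. Then $(X,\Delta_{\epsilon})$ is lc with $\operatorname{Nklt}(X,\Delta_{\epsilon})\subseteq S$ for $\epsilon$ sufficiently small, and
\[
K_X+\Delta\sim_{\mathbb{R},\pi}(K_X+\Delta_{\epsilon})+\epsilon A,
\]
so $(K_X+\Delta)-(K_X+\Delta_{\epsilon})$ is $\pi$-ample. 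Moreover $(K_X+\Delta)|_{\operatorname{Nklt}(X,\Delta_{\epsilon})}$ is $\pi$-semi-ample, being a restriction of the semi-ample divisor $(K_X+\Delta)|_S$. Fujino's base point free theorem for log canonical pairs (in its relative, $\mathbb{R}$-divisor form) then delivers the $\pi$-semi-ampleness of $K_X+\Delta$.

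The main obstacle I foresee is the last step: arranging the perturbation $\Delta_{\epsilon}$ so that the pair stays log canonical with no new lc centers appearing outside $S$, and locating the correct form of the lc base point free theorem for $\mathbb{R}$-Cartier divisors over a base $U$. I expect this to be handled by a standard tie-breaking/perturbation argument on a log resolution of $\Delta+E$ together with the $\mathbb{R}$-divisor version of Fujino's theorem found in his work on the fundamental theorems of the log MMP.
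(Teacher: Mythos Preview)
Your final perturbation step does not go through. Writing $\Delta\sim_{\mathbb{R},\pi}A+E$ with $A$ ample and $E\geq 0$, the divisor $E$ is a \emph{fixed} effective divisor over which you have essentially no control; there is no reason for $(X,(1-\epsilon)\Delta+\epsilon E)$ to be log canonical for small $\epsilon$. Concretely, for any lc place $P$ of $(X,\Delta)$ one computes $a(P,X,\Delta_\epsilon)=\epsilon\bigl(\mathrm{mult}_P f^*\Delta-\mathrm{mult}_P f^*E\bigr)$, and nothing prevents $E$ from having large multiplicity along an lc center of $(X,\Delta)$. ``Tie-breaking on a log resolution'' does not help here: that technique moves among members of a linear system, but the class of $E$ is pinned down by $\Delta-A$ and need not be movable, let alone free. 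For the same reason your inclusion $\mathrm{Nklt}(X,\Delta_\epsilon)\subseteq S$ is unjustified: new non-klt centers can appear along $E$, not along $S$. A further red flag is that, \emph{if} $(X,\Delta_\epsilon)$ were lc, then $(K_X+\Delta)-(K_X+\Delta_\epsilon)\sim_{\mathbb{R},\pi}\epsilon A$ is genuinely $\pi$-ample, hence nef and log big, and Fujino's base point free theorem for lc pairs would give semi-ampleness of $K_X+\Delta$ outright---without ever using the $(n-1)$-dimensional abundance hypothesis or the restriction to $S$. That the inductive hypothesis becomes idle should make you suspicious.

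The paper's proof resolves exactly this difficulty, and the mechanism is a run of the MMP rather than a static perturbation. After reducing to $\mathbb{Q}$-coefficients via Shokurov's rational polytope (a step you also skip), one runs the $(K_X+\Delta-\epsilon\lfloor\Delta\rfloor)$-MMP with scaling, which terminates by \cite{bchm} since this is a klt pair with big boundary. A length-of-extremal-ray computation shows $K_X+\Delta$ is numerically trivial on every contraction, so it remains nef on the output. On that output one is in one of two situations: either a Mori fiber space (handled by pulling back from the base and restricting to a dominating component of $\lfloor\Delta\rfloor$), or a model on which $K_X+\Delta-\epsilon\lfloor\Delta\rfloor$ is itself nef and hence semi-ample. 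In the latter case the relative base locus of $l(K_X+\Delta)$ is contained in $\lfloor\Delta\rfloor$, and a Kawamata--Viehweg vanishing argument on the restriction sequence, together with slc abundance in dimension $n-1$, finishes. The MMP is not a convenience here; it is what produces the geometric situation in which an extension argument of the type you envision actually becomes available.
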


We prove it by using the log minimal model program (log MMP, for short) with scaling. 
A key gradient is termination of the log minimal model program with scaling 
for Kawamata log terminal pairs such that the boundary divisor is big (cf.~\cite{bchm}).
For details, see Section \ref{sec3}.

By the above theorem, we obtain the following results in the minimal model theory for $4$-folds.

\begin{thm}[Relative abundance theorem]\label{thm1.2} 
Let $\pi :X \to U$ be a projective morphism from a normal variety to a variety, 
where the dimension of $X$ is four. 
Let $(X, \Delta)$ be a log canonical pair such that $\Delta$ is a $\pi$-big $\mathbb{R}$-Cartier 
$\mathbb{R}$-divisor. 
If $K_{X}+\Delta$ is $\pi$-nef, then it is $\pi$-semi-ample.
\end{thm}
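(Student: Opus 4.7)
The plan is to deduce Theorem \ref{thm1.2} directly from the Main Theorem (Theorem \ref{thm1.1}) by specializing to $n=4$. The hypothesis needed to apply Theorem \ref{thm1.1} in that dimension is Conjecture \ref{conj1.1} for log canonical three-folds; as recalled in Section \ref{sec1}, this three-dimensional relative abundance statement is known by Keel--Matsuki--McKernan \cite{keel-matsuki-mckernan}. Since $(X,\Delta)$ is a log canonical four-fold with $\Delta$ a $\pi$-big $\mathbb{R}$-Cartier $\mathbb{R}$-divisor and $K_{X}+\Delta$ is $\pi$-nef, every hypothesis of Theorem \ref{thm1.1} with $n=4$ is satisfied, and the conclusion is precisely the $\pi$-semi-ampleness of $K_{X}+\Delta$.

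The one point that would deserve a line of verification is that the three-dimensional abundance theorem is available in the relative form actually used by Theorem \ref{thm1.1}, i.e.\ for an arbitrary projective morphism $\pi' : X' \to U'$ with $\dim X' = 3$ rather than only for projective threefolds. This passage is routine: it can be carried out either by shrinking $U'$ to an affine open and compactifying, or by restricting to general fibers and combining with the relative base point free and cone theorems, and it is already the reading of \cite{keel-matsuki-mckernan} that the introduction of this paper endorses. No new ingredient beyond what is cited there is needed.

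Consequently I do not anticipate any genuine obstacle for Theorem \ref{thm1.2}: once Theorem \ref{thm1.1} is in place, the four-dimensional statement is a purely formal corollary obtained by inserting the known three-dimensional case into its hypothesis. All the substantive content sits in the proof of Theorem \ref{thm1.1}, which, as the introduction indicates, proceeds through the log MMP with scaling and termination for Kawamata log terminal pairs with big boundary in the sense of \cite{bchm}.
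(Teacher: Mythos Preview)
Your proposal is correct and matches the paper's own proof, which reads in full: ``It immediately follows from Theorem \ref{thm1.1} since the relative abundance theorem for log canonical 3-folds holds (cf.~\cite[Theorem A.2]{fujino1}).'' The only cosmetic difference is that the paper cites \cite{fujino1} directly for the \emph{relative} three-dimensional statement, whereas you cite \cite{keel-matsuki-mckernan} and then remark that the passage to the relative setting is routine; either is acceptable.
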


\begin{cor}[Log minimal model program]\label{thm1.8}
Let $\pi :X \to U$ be a projective morphism of normal quasi-projective varieties, 
where the dimension of $X$ is four. 
Let $(X, \Delta)$ be a log canonical pair such that $\Delta$ is a $\pi$-big $\mathbb{R}$-Cartier 
$\mathbb{R}$-divisor.
Then any log MMP of $(X, \Delta)$ with scaling over $U$ terminates with a good minimal model or a 
Mori fiber space of $(X, \Delta)$ over $U$. 
Moreover, if $K_{X}+\Delta$ is $\pi$-pseudo-effective, then any log MMP of $(X,\Delta)$ over $U$ terminates.
\end{cor}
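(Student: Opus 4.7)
The plan is to run a log MMP with scaling, obtain either a Mori fiber space or a (not necessarily good) minimal model of $(X,\Delta)$ over $U$, and in the latter case invoke Theorem \ref{thm1.2} to upgrade it to a \emph{good} minimal model. The second assertion will then follow from the general principle that the existence of a good minimal model forces every $(K_{X}+\Delta)$-log MMP over $U$ to terminate.

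First I would choose an auxiliary $\mathbb{R}$-divisor $H\geq 0$ with $(X,\Delta+H)$ log canonical and $K_{X}+\Delta+H$ $\pi$-nef, and run a $(K_{X}+\Delta)$-log MMP over $U$ with scaling of $H$. Existence of each step (divisorial contraction or lc flip) in dimension four is known, so the substantive issue is termination. Here I would exploit that $\Delta$ is $\pi$-big by writing $\Delta\sim_{\mathbb{R},U} A+B$ with $A$ a $\pi$-ample $\mathbb{R}$-divisor and $B\geq 0$, perturbing so that this decomposition is preserved along the MMP, and passing to a dlt blow-up to reduce to the dlt case. Termination with scaling in this ample-plus-effective boundary situation is the log canonical counterpart of \cite{bchm} and is available in our setting. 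If the MMP ends with a Mori fiber space we are done; otherwise we obtain a minimal model $(X',\Delta')$ of $(X,\Delta)$ over $U$, and since $\Delta'$ is still $\pi$-big and $K_{X'}+\Delta'$ is $\pi$-nef, Theorem \ref{thm1.2} gives $\pi$-semi-ampleness, producing a good minimal model.

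For the second assertion, assume $K_{X}+\Delta$ is $\pi$-pseudo-effective. By the previous step $(X,\Delta)$ admits a good minimal model over $U$. I would then invoke the standard principle that, for a log canonical pair admitting a good minimal model over the base, any sequence of $(K_{X}+\Delta)$-divisorial contractions and flips over $U$ terminates: an arbitrary MMP step is compared with the fixed good minimal model via a common resolution, and the $\pi$-semi-ampleness on the good model, combined with a monotonicity argument for log discrepancies (difficulty), rules out an infinite sequence of flips. This yields termination of arbitrary MMPs, not just those with scaling.

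The step I expect to be the main obstacle is the termination-with-scaling statement in the log canonical setting, since the direct BCHM argument handles only the klt big-boundary case; in the lc case one must pass to a dlt modification, keep careful track of the scaling divisor through lc flips and divisorial contractions on the dlt model, and invoke special termination on the non-klt locus. Once this termination is secured, Theorem \ref{thm1.2} and the good-minimal-model-implies-termination principle combine to give Corollary \ref{thm1.8}.
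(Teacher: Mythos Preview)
Your treatment of the first assertion is essentially the paper's: the paper isolates the termination-with-scaling statement as Proposition~\ref{prop4.1}, proving it (for \emph{arbitrary} lc 4-folds, not only those with big boundary) by lifting the MMP with scaling to a $\mathbb{Q}$-factorial dlt model via a dlt blow-up and then invoking \cite[Lemma~3.8]{birkar1}; Theorem~\ref{thm1.2} then upgrades the resulting minimal model to a good one. Your emphasis on the decomposition $\Delta\sim_{\mathbb{R},U}A+B$ is unnecessary here---bigness of $\Delta$ enters only through Theorem~\ref{thm1.2}, not through the termination step---but the overall route is the same.

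The second assertion, however, has a genuine gap. The ``standard principle'' you invoke---that existence of a good minimal model forces termination of \emph{every} $(K_{X}+\Delta)$-MMP via a discrepancy/difficulty comparison with the fixed good model---is not an available theorem in dimension four. Monotonicity of log discrepancies under flips is of course true, but without further input there is no bound on how many times discrepancies can increase; the classical Shokurov difficulty that makes this work in dimension three does not suffice in dimension four. What the paper does instead is: from the first half, $K_{X}+\Delta$ is $\pi$-effective (being $\pi$-pseudo-effective and admitting a good minimal model), and then termination of \emph{any} log MMP is read off from \cite[Main~Theorem~1.3]{birkar0}, which establishes termination for effective lc pairs assuming the ACC for lc thresholds and the LMMP in one dimension lower---both available in dimension four. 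So the correct replacement for your difficulty argument is an appeal to Birkar's ACC-based termination result for effective pairs.
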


\begin{cor}[Finite generation of adjoint ring]\label{thm1.3}
Let $\pi :X \to U$ be a projective morphism from a normal variety to a variety, 
where the dimension of $X$ is four. 
Let $\Delta^{\bullet}=(\Delta_{1}, \, \cdots , \Delta_{n})$ be an 
$n$-tuple of $\pi$-big $\mathbb{Q}$-Cartier $\mathbb{Q}$-divisors such that 
$(X, \Delta_{i})$ is log canonical for any $1 \leq i \leq n$. 
Then the adjoint ring 
$$\mathcal{R}(\pi, \Delta^{\bullet})
=\underset{(m_{1}, \, \cdots ,\, m_{n}) \in (\mathbb{Z}_{ \geq 0})^{n}}{\bigoplus}
\pi_{*} \mathcal{O}_{X}(\llcorner \sum_{i=1}^{n}m_{i}(K_{X}+\Delta_{i}) \lrcorner)$$ 
is a finitely generated $\mathcal{O}_{U}$-algebra. 
\end{cor}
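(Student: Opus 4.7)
My plan is to deduce finite generation of the adjoint ring from the existence of good minimal models on a whole rational polytope of boundaries, which is the content of Corollary \ref{thm1.8} and Theorem \ref{thm1.2}, and then to glue the resulting finitely many semi-ample fibrations via the standard Shokurov-polytope technique.

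First I would introduce the rational polytope $\mathcal{P}\subset \mathrm{Div}_{\mathbb{R}}(X)$ spanned by $\Delta_{1},\ldots,\Delta_{n}$. Since log canonicity and $\pi$-bigness are both convex conditions, every $\Delta\in \mathcal{P}$ gives a log canonical pair $(X,\Delta)$ with $\Delta$ a $\pi$-big $\mathbb{R}$-Cartier $\mathbb{R}$-divisor, so Theorem \ref{thm1.2} and Corollary \ref{thm1.8} apply uniformly. After discarding indices $i$ for which $K_{X}+\Delta_{i}$ is not $\pi$-pseudo-effective (whose contribution to the multi-graded ring vanishes in all sufficiently high multi-degrees in the $i$-th direction), I may assume every $K_{X}+\Delta$ with $\Delta\in\mathcal{P}$ is $\pi$-pseudo-effective.

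Next, for each $\Delta\in\mathcal{P}$ Corollary \ref{thm1.8} produces a log MMP over $U$ terminating with a good minimal model, on which $K_{X}+\Delta$ is $\pi$-semi-ample by Theorem \ref{thm1.2}. I would then invoke the finiteness-of-minimal-models argument in the style of \cite{bchm}, adapted to the log canonical big-boundary setting: there exists a finite rational polyhedral decomposition $\mathcal{P}=\bigcup_{j} \mathcal{P}_{j}$ together with birational contractions $\varphi_{j}:X\dashrightarrow Y_{j}$ over $U$ such that $\varphi_{j}$ is simultaneously a good minimal model of $(X,\Delta)$ for every $\Delta\in\mathcal{P}_{j}$. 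On each $Y_{j}$ the pushed-forward divisor $\varphi_{j*}(K_{X}+\Delta)$ is $\pi$-semi-ample for all $\Delta\in\mathcal{P}_{j}$, so by the Kawamata--Shokurov base point free theorem the sub-algebra of $\mathcal{R}(\pi,\Delta^{\bullet})$ obtained by restricting multi-indices to the rational cone over $\mathcal{P}_{j}$ is a finitely generated $\mathcal{O}_{U}$-algebra.

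Finally, since the monoid $(\mathbb{Z}_{\geq 0})^{n}$ is covered by finitely many rational sub-cones matching the chambers $\mathcal{P}_{j}$, a standard convex-geometric patching argument assembles these finitely many sub-algebras into a finite generating set for the whole $\mathcal{O}_{U}$-algebra $\mathcal{R}(\pi,\Delta^{\bullet})$. The technical heart is the uniform existence of a common good minimal model on each chamber, i.e.\ the parametrised finiteness-of-models step: this is stronger than the pointwise termination given by Corollary \ref{thm1.8}, but in the $\pi$-big log canonical setting it follows from the same scaling argument that drives Corollary \ref{thm1.8}, applied with the scaling divisor varying along edges of $\mathcal{P}$. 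I expect this polytope-parametrised termination and the lc-refinement of the BCHM gluing to be the main obstacle.
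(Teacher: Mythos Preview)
Your overall strategy---passing to the rational polytope spanned by the $\Delta_{i}$, invoking termination and abundance to obtain good minimal models, subdividing the polytope into finitely many chambers each admitting a common good model, and then gluing the resulting finitely generated semi-ample sub-algebras---is precisely the route the paper takes. The paper's own proof simply reduces to the affine case and then cites Proposition~\ref{prop4.1} and Theorem~\ref{thm1.2} together with \cite[Lemma 3.2 and Section 4]{hashizume}, which carries out exactly this Shokurov-polytope argument.

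There is, however, a genuine error in your preliminary reduction. You propose to ``discard indices $i$ for which $K_{X}+\Delta_{i}$ is not $\pi$-pseudo-effective'', claiming their contribution vanishes in high multi-degrees in the $i$-th direction. This is false: even if $K_{X}+\Delta_{i}$ is not $\pi$-pseudo-effective, the graded piece
\[
\pi_{*}\mathcal{O}_{X}\bigl(\llcorner \textstyle\sum_{j} m_{j}(K_{X}+\Delta_{j})\lrcorner\bigr)
\]
can be nonzero for arbitrarily large $m_{i}$, as long as the other $m_{j}$ are large enough to make the total divisor $\pi$-effective. Dropping the $i$-th vertex therefore loses graded pieces you must account for. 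The correct statement is that the sub-cone
\[
\mathcal{E}=\Bigl\{(m_{1},\ldots,m_{n})\in\mathbb{R}_{\geq 0}^{n}\ \Big|\ \textstyle\sum_{j} m_{j}(K_{X}+\Delta_{j})\text{ is $\pi$-pseudo-effective}\Bigr\}
\]
is rational polyhedral, and that the adjoint ring is supported on $\mathcal{E}$. But rationality of $\mathcal{E}$ is not a free preliminary reduction: it is itself an output of the polytope MMP argument, obtained by running MMPs with scaling along segments of $\mathcal{P}$ and recording the (rational) threshold at which a Mori fibre space appears. You should fold this into the chamber-decomposition step rather than treat it as a separate reduction; once you do, the rest of your outline matches the paper's intended argument.
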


We note that we need to construct log flips for log canonical pairs 
to run the log minimal model program.
Fortunately, the existence of log flips for log canonical pairs is known for all dimensions 
(cf.~\cite{shokurov-flip} for threefolds, \cite{fujino-fin} for $4$-folds and 
\cite{birkar-flip} or \cite{haconxu-flip} for all higher dimensions).
Therefore we can run the log minimal model program for log canonical pairs in all dimensions. 
By the above corollaries, we can establish almost completely the minimal model theory
for any log canonical $4$-fold $(X,\Delta)$ such that $\Delta$ is big. 

The contents of this paper are as follows. 
In Section \ref{sec2}, we collect some notations and definitions for reader's convenience. 
In Section \ref{sec3}, we prove Theorem \ref{thm1.1}.
In Section \ref{sec4}, we discuss the log minimal model program for log canonical $4$-folds and prove
Theorem \ref{thm1.2}, Corollary \ref{thm1.8} and Corollary \ref{thm1.3}. 

Throughout this paper, we work over the complex number field.

\begin{ack}
The author would like to thank his supervisor Professor Osamu Fujino for many useful advice and suggestions. 
He is grateful to Professor Yoshinori Gongyo for giving information about the latest studies of 
the minimal model theory. 
He also thanks my colleagues for discussions.  
\end{ack}

\section{Notations and definitions}\label{sec2}

In this section, we collect some notations and definitions. 
We will freely use the standard notations in \cite{bchm}. 
Here we write down some important notations and definitions for reader's convenience.

\begin{say}[Divisors]\label{say2.1}
Let $X$ be a normal variety. 
${\rm WDiv}_{\mathbb{R}}(X)$ is the $\mathbb{R}$-vector space with canonical basis given 
by the prime divisors of $X$. 
A variety $X$ is called $\mathbb{Q}${\it -factorial} if every Weil divisor is $\mathbb{Q}$-Cartier.
Let $\pi :X \to U$ be a morphism from a normal variety to a variety and 
let $D=\sum a_{i}D_{i}$ be an $\mathbb{R}$-divisor on $X$. 
Then $D$ is a {\it boundary} $ \mathbb{R}${\it -divisor} if $0 \leq a_{i} \leq 1$ for any $i$.
The {\it round down} of $D$, denoted by $\llcorner D \lrcorner$, is $\sum \llcorner a_{i} \lrcorner D_{i}$ 
where $\llcorner a_{i} \lrcorner$ is the largest integer which is not greater than $a_{i}$. 
$D$ is {\it pseudo-effective over} $U$ (or $\pi${\it -pseudo-effective}) 
if $D$ is $\pi$-numerically equivalent to the limit of effective
$\mathbb{R}$-divisors modulo numerically equivalence over $U$. 
$D$ is  {\em nef over} $U$ (or $\pi${\it -nef}) if it is $\mathbb{R}$-Cartier and  
$\left(D \cdot C\right)\geq 0$ for every proper curve $C$ 
on $X$ contained in a fiber of $\pi$. 
$D$ is {\it big over} $U$ (or $\pi${\it -big}) if it is $\mathbb{R}$-Cartier and 
there exists a $\pi$-ample divisor $A$ 
and an effective divisor $E$ such that $D \sim_{\mathbb{R},\, U} A+E$. 
$D$ is {\it semi-ample over} $U$ (or $\pi${\it -semi-ample}) 
if $D$ is an $\mathbb{R}_{\geq 0}$-linear combination of semi-ample Cartier divisors over $U$, 
or equivalently, there exists a morphism $f: X \to Y$ to a variety $Y$ over $U$ such that 
$D$ is $\mathbb{R}$-linearly equivalent to the pullback of an ample $\mathbb{R}$-divisor over $U$. 
\end{say}

\begin{say}[Singularities of pairs]\label{say2.2} 
Let $\pi :X \to U$ be a projective morphism from a normal variety to a variety and 
$\Delta$ be an effective $\mathbb{R}$-divisor 
such that $K_X+\Delta$ is $\mathbb{R}$-Cartier. 
Let $f:Y\to X$ be a birational morphism. Then $f$ is called a {\it log resolution} of the pair $(X, \Delta)$ 
if $f$ is projective, $Y$ is smooth, the exceptional locus ${\rm Ex}(f)$ is pure codimension one and
${\rm Supp}\,f_{*}^{-1}\Delta \cup {\rm Ex}(f)$ is simple normal crossing.
Suppose that $f$ is a log resolution of the pair $(X, \Delta)$. 
Then we may write 
$$K_Y=f^*(K_X+\Delta)+\sum b_i E_i$$ 
where $E_{i}$ are distinct prime divisors on $Y$. 
Then the {\em log discrepancy} $a(E_{i}, X,\Delta)$ of $E_{i}$ with respect to $(X,\Delta)$ is $1+b_{i}$. 
The pair $(X, \Delta)$ is called {\it Kawamata log terminal} ({\it klt}, for short) if 
$a(E_{i}, X, \Delta) > 0$ for any log resolution $f$ of $(X, \Delta)$ and any $E_{i}$ on $Y$. 
$(X, \Delta)$ is called {\it log canonical} ({\it lc}, for short) if 
$a(E_{i}, X, \Delta) \geq 0$ for any log resolution $f$ of $(X, \Delta)$ and any $E_{i}$ on $Y$. 
$(X, \Delta)$ is called {\it divisorially log terminal} ({\it dlt}, for short) if $\Delta$ is a boundary 
$\mathbb{R}$-divisor and there exists a log resolution 
$f:Y \to X$ of $(X, \Delta)$ such that $a(E, X, \Delta) > 0$ for any $f$-exceptional divisor $E$ on $Y$.
\end{say}

\begin{defn}[log minimal models]\label{defn2.3}
Let $\pi:X \to U$ be a projective morphism from a normal variety to a variety and let 
$(X,\Delta)$ be a log canonical pair.  
Let $\pi ':Y \to U$ be a projective morphism from a normal variety to $U$ 
and $\phi:X \dashrightarrow Y$ 
be a birational map over $U$ such that $\phi^{-1}$ does not contract any divisors. 
Set $\Delta_{Y}=\phi_{*}\Delta$. 
Then the pair $(Y,\Delta_{Y})$ is a {\em log minimal model} of $(X,\Delta)$ over $U$ if 
\begin{itemize}
\item[($1$)]
$K_{Y}+\Delta_{Y}$ is nef over $U$, and 
\item[($2$)]
for any $\phi$-exceptional prime divisor $D$ on $X$, we have
$$a(D, X, \Delta) < a(D, Y, \Delta_{Y}).$$ 
\end{itemize}
A log minimal model $(Y,\Delta_{Y})$ of $(X, \Delta)$ 
over $U$ is called a {\it good minimal model} if $K_{Y}+\Delta_{Y}$ is semi-ample over $U$.    
\end{defn}

Finally, let us recall the definition of semi log canonical pairs.

\begin{defn}[semi log canonical pairs, cf.~{\cite[Definition 4.11.3]{fujino-book}}]\label{defn2.3a}
Let $X$ be a reduced $S_{2}$ scheme. 
We assume that it is pure $n$-dimensional and normal crossing in codimension one. 
Let $X=\cup X_{i}$ be the irreducible decomposition 
and let $\nu: X'=\amalg X_{i}' \to X= \cup X_{i}$ be the normalization. 
Then the {\em conductor ideal} of $X$ is defined by 
$$\mathfrak{cond}_{X}=
\mathcal{H}om_{\mathcal{O}_{X}}(\nu_{*}\mathcal{O}_{X'}, \mathcal{O}_{X})\subset \mathcal{O}_{X}$$
and the {\em conductor} $\mathcal{C}_{X}$ of $X$ is the subscheme defined by $\mathfrak{cond}_{X}$. 
Since $X$ is $S_{2}$ scheme and normal crossing in codimension one, $\mathcal{C}_{X}$ is a reduced 
closed subscheme of pure codimension one in $X$. 

Let $\Delta$ be a boundary $\mathbb{R}$-divisor on $X$ such that $K_{X}+\Delta$ is $\mathbb{R}$-Cartier
and ${\rm Supp}\, \Delta$ does not contain any irreducible component of $\mathcal{C}_{X}$. 
An $\mathbb{R}$-divisor $\Theta$ on $X'$ is defined by $K_{X'}+\Theta=\nu^{*}(K_{X}+\Delta)$ and 
we set $\Theta_{i}=\Theta \!\!\mid _{X_{i}'}$. 
Then $(X, \Delta)$ is called {\em semi log canonical} ({\em slc}, for short) if $(X_{i}', \Theta_{i})$ is lc for any $i$. 
\end{defn}

\section{Proof of the main theorem}\label{sec3}
In this section, we prove Theorem \ref{thm1.1}.
Before the proof, let us recall the useful theorem called dlt blow-up by Hacon. 

\begin{thm}[cf.~{\cite[Theorem 10.4]{fujino-fund}}, {\cite[Theorem 3.1]{kollarkovacs}}]\label{thm4.1}
Let $X$ be a normal quasi-projective variety of dimension $n$ 
and let $\Delta$ be an $\mathbb{R}$-divisor such that $(X, \Delta)$ is log canonical. 
Then there exists a projective birational morphism $f:Y \to X$ from a normal quasi-projective 
variety $Y$ such that 
\begin{itemize}
\item[(1)]
$Y$ is $\mathbb{Q}$-factorial, and
\item[(2)]
if we set
$$\Delta_{Y}=f_{*}^{-1}\Delta +\sum_{E{\rm :}f \mathchar`- {\rm exceptional}}E,$$
then $(Y, \Delta_{Y})$ is dlt and $K_{Y}+\Delta_{Y}=f^{*}(K_{X}+\Delta)$.
\end{itemize}
\end{thm}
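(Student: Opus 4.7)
The approach is to start from a log resolution of $(X, \Delta)$ and run a relative MMP that contracts exactly the exceptional divisors of positive log discrepancy, leaving those of log discrepancy zero (the ``lc places'') untouched.

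First I would take a log resolution $g : W \to X$ of $(X, \Delta)$ and set
\[
\Gamma_{W} = g_{*}^{-1}\Delta + \sum_{E} E,
\]
where the sum ranges over all $g$-exceptional prime divisors on $W$. The pair $(W, \Gamma_{W})$ is log smooth, hence dlt, and $W$ is $\mathbb{Q}$-factorial. A direct computation with log discrepancies gives
\[
K_{W} + \Gamma_{W} = g^{*}(K_{X} + \Delta) + F, \qquad F = \sum_{E} a(E, X, \Delta)\, E,
\]
and since $(X, \Delta)$ is lc, $F$ is an effective $g$-exceptional $\mathbb{R}$-divisor.

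Next I would run a $(K_{W} + \Gamma_{W})$-MMP over $X$ with scaling of a suitable ample divisor. Because $K_{W} + \Gamma_{W} \sim_{\mathbb{R}, X} F$ with $F$ effective and $g$-exceptional, every divisorial contraction in the MMP contracts a component of the birational transform of $F$, so only divisors $E$ with $a(E, X, \Delta) > 0$ can ever be contracted. When the MMP terminates at a model $f : Y \to X$, the difference $K_{Y} + \Delta_{Y} - f^{*}(K_{X} + \Delta)$ is the birational transform of $F$---effective, $f$-exceptional, and $f$-nef---so the negativity lemma forces it to vanish. This yields $K_{Y} + \Delta_{Y} = f^{*}(K_{X} + \Delta)$ with $\Delta_{Y}$ the strict transform of $\Delta$ plus all $f$-exceptional prime divisors, exactly as demanded in the statement.

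The main obstacle is running the MMP, since $(W, \Gamma_{W})$ is only dlt and BCHM does not apply as stated. I would resolve this by the standard perturbation trick: pick a small $\epsilon > 0$ and a general $g$-ample $\mathbb{Q}$-divisor $A$ on $W$ so that $(W, (1-\epsilon)\Gamma_{W} + \epsilon A)$ is klt with $g$-big boundary. BCHM then yields termination of an MMP with scaling for this klt pair over $X$, and for $\epsilon$ sufficiently small the relevant negative extremal rays coincide with those of $K_{W} + \Gamma_{W}$, so the same sequence of steps serves as our MMP. The dlt property and $\mathbb{Q}$-factoriality are preserved throughout by standard arguments, producing the desired model $(Y, \Delta_{Y})$.
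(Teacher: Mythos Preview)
The paper does not actually prove this theorem: it is stated with references to \cite[Theorem 10.4]{fujino-fund} and \cite[Theorem 3.1]{kollarkovacs} and then used as a black box in the proof of Theorem~\ref{thm1.1}. So there is no ``paper's own proof'' to compare against. Your outline is precisely the standard argument found in those references: take a log resolution, write $K_{W}+\Gamma_{W}=g^{*}(K_{X}+\Delta)+F$ with $F\geq 0$ exceptional, run an MMP over $X$, and use the negativity lemma to kill the strict transform of $F$.

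One point deserves more care. Your perturbation to $(W,(1-\epsilon)\Gamma_{W}+\epsilon A)$ does give a klt pair with $g$-big boundary, and BCHM gives termination for \emph{that} pair, but the assertion that ``for $\epsilon$ sufficiently small the relevant negative extremal rays coincide with those of $K_{W}+\Gamma_{W}$'' is not automatic: the two MMPs need not trace out the same sequence of contractions. The usual way to handle termination here is different. Since $g$ is a birational morphism from a smooth variety, there is an effective $g$-exceptional divisor $G$ with $-G$ relatively ample; one then runs the $(K_{W}+\Gamma_{W})$-MMP with scaling of a suitable ample divisor and observes that, because $K_{W}+\Gamma_{W}\equiv_{X}F\geq 0$ is effective over $X$, the existence of a minimal model (hence termination of the MMP with scaling) follows from BCHM applied to a klt perturbation whose MMP is \emph{the same} sequence of maps, e.g.\ by subtracting a small multiple of $F$ itself or of $G$ from $\Gamma_{W}$ so that the perturbed log canonical divisor is a positive multiple of $K_{W}+\Gamma_{W}$ over $X$. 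Alternatively one invokes directly \cite[Corollary 1.4.2]{bchm} or the dlt version in \cite{fujino-fund}. Your sketch is correct in spirit, but as written the bridge between the perturbed MMP and the original one is a genuine gap.
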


\begin{proof}[Proof of Theorem \ref{thm1.1}]
Without loss of generality, we can assume that $U$ is affine. 
By Theorem \ref{thm4.1}, we may assume that $X$ is $\mathbb{Q}$-factorial.
Let $V$ be the finite dimensional subspace in 
${\rm WDiv}_{\mathbb{R}}(X)$ spanned by all components of $\Delta$.
We set
$$\mathcal{N}=\{B \in V \! \mid \! (X,B){\rm \; is\; log\; canonical\; and \;}
K_{X}+B{\rm \;is\;}\pi\mathchar`-{\rm nef}\}.$$
Then $\mathcal{N}$ is a rational polytope in $V$
(cf.~\cite[Theorem 4.7.2 (3)]{fujino-book}, \cite[6.2. First Main Theorem]{shokurov}). 
Since $\Delta$ is $\pi$-big, there are finitely many $\pi$-big $\mathbb{Q}$-Cartier $\mathbb{Q}$-divisors 
$\Delta_{1}, \, \cdots , \Delta_{l} \in \mathcal{N}$ and positive real numbers $r_{1}, \, \cdots, r_{l}$ such that 
$\sum_{i=1}^{l}r_{i}=1$ and $\sum_{i=1}^{l}r_{i}\Delta_{i}=\Delta$. 
Since we have $K_{X}+ \Delta =\sum_{i=1}^{l}r_{i}(K_{X}+\Delta_{i})$, 
it is sufficient to prove that $K_{X}+ \Delta_{i}$ is $\pi$-semi-ample for any $i$. 
Therefore we may assume that $\Delta$ is a $\mathbb{Q}$-divisor.
By using Theorem \ref{thm4.1} again, we may assume that $(X,\Delta)$ is dlt.

If $\llcorner \Delta \lrcorner=0$, then $(X, \Delta)$ is klt and 
Theorem \ref{thm1.1} follows from \cite[Corollary 3.9.2]{bchm}. 
Thus we may assume that $\llcorner \Delta \lrcorner \neq 0$.
Let $k$ be a positive integer such that $k(K_{X}+\Delta)$ is Cartier. 
Pick a sufficiently small positive rational number $\epsilon$ such that 
$\Delta-\epsilon \llcorner \Delta \lrcorner$ is big over $U$ and 
$(2k \epsilon \cdot {\rm dim}\,X)/(1-\epsilon)<1.$
By \cite[Lemma 3.7.5]{bchm}, there is a boundary $\mathbb{Q}$-divisor $\Delta'$, 
which is the sum of a general $\pi$-ample $\mathbb{Q}$-divisor and an effective divisor, such that 
$K_{X}+(\Delta-\epsilon \llcorner \Delta \lrcorner) \sim_{\mathbb{Q},\,U}K_{X}+\Delta'$
and $(X,\Delta')$ is klt. 
By \cite[Theorem E]{bchm}, the $(K_{X}+\Delta')$-log MMP with scaling a $\pi$-ample divisor 
$$X=X_{1}\dashrightarrow X_{2}
\dashrightarrow \cdots \dashrightarrow X_{i}\dashrightarrow \cdots$$
over $U$ terminates.
Since $\Delta-\epsilon \llcorner \Delta \lrcorner \sim_{\mathbb{Q},\,U}\Delta'$, 
it is also the log MMP of $(X,\Delta-\epsilon\llcorner \Delta \lrcorner )$ over $U$.
Let $\Delta_{i}$ be the birational transform of $\Delta$ on $X_{i}$. 
Then $(X_{m},\Delta_{m}-\epsilon \llcorner \Delta_{m} \lrcorner)$ is a log minimal model or 
a Mori fiber space $h:X_{m}\to Z$ of $(X,\Delta-\epsilon\llcorner \Delta \lrcorner )$ 
over $U$ for some $m \in \mathbb{Z}_{> 0}$.

Let $f_{i}:X_{i}\to V_{i}$ be the contraction morphism of the 
$i$-th step of the log MMP 
over $U$, that is, $X_{i+1}= V_{i}$ or $X_{i+1}\to V_{i}$ is the flip of $f_{i}$ over $U$. 
Then $K_{X_{i}}+\Delta_{i}$ is nef over $U$ and $f_{i}$-trivial for any $i\geq1$. 
Indeed, by the induction on $i$, it is sufficient to prove that $K_{X}+\Delta$ is $f_{1}$-trivial 
and $K_{X_{2}}+\Delta_{2}$ is nef over $U$.
Recall that $k$ is a positive integer such that $k(K_{X}+\Delta)$ is Cartier. 
We show that $K_{X}+\Delta$ is $f_{1}$-trivial and 
$k(K_{X_{2}}+\Delta_{2})$ is a nef Cartier divisor over $U$. 
Since $K_{X}+\Delta$ is nef over $U$, for any 
$(K_{X}+\Delta-\epsilon\llcorner \Delta \lrcorner)$-negative extremal ray 
over $U$, it is also a $(K_{X}+\Delta-\llcorner \Delta \lrcorner)$-negative 
extremal ray over $U$. 
Then we can find a rational curve $C$ on $X$ contracted by $f_{1}$ such that 
$0<-(K_{X}+\Delta-\llcorner \Delta \lrcorner)\cdot C \leq 2\,{\rm dim}\,X$
by \cite[Theorem 18.2]{fujino-fund}. 
By the choice of $\epsilon$, we have 
\begin{equation*}
\begin{split}
0&\leq k(K_{X}+\Delta)\cdot C \\
&= \frac{k}{1-\epsilon}\bigl( 
(K_{X}+\Delta-\epsilon\llcorner \Delta \lrcorner)\cdot C
-\epsilon(K_{X}+\Delta-\llcorner \Delta \lrcorner)\cdot C
\bigr) \\
&<\frac{k\epsilon}{1-\epsilon}\cdot2\,{\rm dim}\,X <1.
\end{split}
\end{equation*}
Since $k(K_{X}+\Delta)$ is Cartier, $k(K_{X}+\Delta)\cdot C$ is an integer. 
Then we have $k(K_{X}+\Delta)\cdot C=0$ and thus $K_{X}+\Delta$ 
is $f_{1}$-trivial. 
By the cone theorem (cf.~\cite[Theorem 4.5.2]{fujino-book}), 
there is a Cartier divisor $D$ on $V_{1}$ such that $k(K_{X}+\Delta)\sim f_{1}^{*}D$. 
Since $k(K_{X}+\Delta)$ is nef over $U$, $D$ is also nef over $U$. 
Let $g:W \to X$ and $g':W \to X_{2}$ be a common resolution of $X\dashrightarrow X_{2}$. 
Then $g^{*}(K_{X}+\Delta)=g'^{*}(K_{X_{2}}+\Delta_{2})$ by the negativity lemma 
because $K_{X}+\Delta$ is $f_{1}$-trivial. 
Then $k(K_{X_{2}}+\Delta_{2})$ is the pullback of $D$ and 
therefore it is a nef Cartier divisor over $U$. 
Thus, $K_{X_{i}}+\Delta_{i}$ is nef over $U$ and $f_{i}$-trivial for any $i\geq1$. 

Like above, by taking a common resolution of $X_{i}\dashrightarrow X_{i+1}$
and the negativity lemma, we see that  
$K_{X{i}}+\Delta_{i}$ is semi-ample over $U$ if and only if $K_{X_{i+1}}+\Delta_{i+1}$ 
is semi-ample over $U$ for any $1 \leq i \leq m-1$.
By replacing $(X,\Delta)$ with $(X_{m}, \Delta_{m})$, we may assume that 
$X$ is a log minimal model or a Mori fiber space $h:X\to Z$ of 
$(X,\Delta-\epsilon\llcorner \Delta \lrcorner )$ over $U$. 
We note that after replacing $(X,\Delta)$ with $(X_{m}, \Delta_{m})$, $(X,\Delta)$ is lc but not necessarily dlt.

\begin{case}\label{case3.2.1}
$X$ is a Mori fiber space $h:X\to Z$ of 
$(X,\Delta-\epsilon\llcorner \Delta \lrcorner )$ over $U$. 
\end{case}

\begin{proof}[Proof of Case 1]
First, note that in this case $K_{X}+\Delta$ is $h$-trivial by the above discussion. 
Moreover $\llcorner \Delta \lrcorner$ is ample over $Z$.
By the cone theorem (cf.~\cite[Theorem 4.5.2]{fujino-book}), there exists a 
$\mathbb{Q}$-Cartier $\mathbb{Q}$-divisor $\Xi$ on $Z$ such that 
$K_{X}+\Delta \sim _{\mathbb{Q},\, U} h^{*}\Xi$. 
Since $\llcorner \Delta \lrcorner$ is ample over $Z$, ${\rm Supp}\, \llcorner \Delta \lrcorner$ dominates $Z$. 
In particular, there exists a component of  $\llcorner \Delta \lrcorner$, which we put $T$, 
such that $T$ dominates $Z$. 
Let $f:(Y,\Delta_{Y})\to (X,\Delta)$ be a dlt blow-up (see Lemma \ref{thm4.1}) and  
$\widetilde{T}$ be the strict transform of $T$ on $Y$.
Then $K_{X}+\Delta$ is semi-ample over $U$ if and only if $K_{Y}+\Delta_{Y}$ 
is semi-ample over $U$.
Furthermore, we have 
$K_{\widetilde{T}}+{\rm Diff}(\Delta_{Y}- \widetilde{T})=((h\circ f)\! \mid _{\widetilde{T}})^{*}\Xi$ 
since ${\widetilde{T}}$ dominates $Z$.
Thus it is sufficient to prove that 
$K_{\widetilde{T}}+{\rm Diff}(\Delta_{Y}- \widetilde{T})$ is semi-ample over $U$. 
Since $(Y,\Delta_{Y})$ is dlt, $\widetilde{T}$ is normal by \cite[Corollary 5.52]{kollar-mori}. 
By \cite[17.2. Theorem]{kollar}, we see that the pair $(\widetilde{T}, {\rm Diff}(\Delta_{Y}- \widetilde{T}))$ is lc. 
Then $K_{\widetilde{T}}+{\rm Diff}(\Delta_{Y}- \widetilde{T})$ is semi-ample over $U$ by 
the relative abundance theorem for log canonical $(n-1)$-folds.
So we are done.
\end{proof}

\begin{case}\label{case3.2.2}
$X$ is a log minimal model of 
$(X,\Delta-\epsilon\llcorner \Delta \lrcorner )$ over $U$. 
\end{case}

\begin{proof}[Proof of Case 2]
In this case, both $K_{X}+\Delta$ and $K_{X}+\Delta-\epsilon \llcorner \Delta \lrcorner$ are nef over $U$. 
By \cite[Corollary 3.9.2]{bchm},
$K_{X}+\Delta-\epsilon \llcorner \Delta \lrcorner$ is semi-ample over $U$. 
Therefore we may assume that $\llcorner \Delta \lrcorner \neq 0$, and 
there exists a sufficiently large and divisible positive integer $l$ such that 
both $l(K_{X}+\Delta)$ and $l \epsilon \llcorner \Delta \lrcorner$ are Cartier and
$$\pi^{*}\pi_{*}\mathcal{O}_{X}(l(K_{X}+\Delta-\epsilon \llcorner \Delta \lrcorner))
\to \mathcal{O}_{X}(l(K_{X}+\Delta-\epsilon \llcorner \Delta \lrcorner)) $$
is surjective. 
Then, in the following diagram, 
$$
\xymatrix{
\pi^{*}\pi_{*}\mathcal{O}_{X}(l(K_{X}+\Delta-\epsilon \llcorner \Delta \lrcorner))\!
\mid _{X \setminus \llcorner \Delta \lrcorner} \ar[d] \ar[r]&
\pi^{*}\pi_{*}\mathcal{O}_{X}(l(K_{X}+\Delta))\!\mid _{X \setminus \llcorner \Delta \lrcorner} \ar[d] \\
\mathcal{O}_{X}(l(K_{X}+\Delta-\epsilon \llcorner \Delta \lrcorner))\!
\mid _{X \setminus \llcorner \Delta \lrcorner} \ar[r]^{\quad  \;\; \cong}&
\mathcal{O}_{X}(l(K_{X}+\Delta))\!\mid _{X \setminus \llcorner \Delta \lrcorner}
}
$$
the left vertical morphism is surjective. 
Moreover, the lower horizontal morphism is an isomorphism.
Therefore the right vertical morphism is surjective. 
Thus 
 $\pi^{*}\pi_{*}\mathcal{O}_{X}(l(K_{X}+\Delta)) \to \mathcal{O}_{X}(l(K_{X}+\Delta))$ is 
surjective outside of $\llcorner \Delta \lrcorner$.

Next, set $D={\rm Diff}(\Delta-\llcorner \Delta \lrcorner)$ and consider the following exact sequence
\begin{equation*}
\begin{split}
0 \to \mathcal{O}_{X}(l'(K_{X}+\Delta)-\llcorner \Delta \lrcorner) 
&\to \mathcal{O}_{X}(l'(K_{X}+\Delta))\\
&\to \mathcal{O}_{\llcorner \Delta \lrcorner}(l'(K_{\llcorner \Delta \lrcorner}+D)) \to 0,
\end{split}
\end{equation*}
where $l'$ is a sufficiently large and divisible positive integer such that $1/l' \leq \epsilon$. 
Then we have
$$l'(K_{X}+\Delta)-\llcorner \Delta \lrcorner=l'(K_{X}+\Delta-\frac{1}{l'}\llcorner \Delta \lrcorner ).$$
Moreover, $\Delta- (1/l')\llcorner \Delta \lrcorner$ is big over $U$ and 
$K_{X}+ \Delta- (1/l')\llcorner \Delta \lrcorner$ is nef over $U$.
Since $(X, \Delta-(1/l')\llcorner \Delta \lrcorner)$ is klt, by \cite[Lemma 3.7.5]{bchm}, 
we may find a $\pi$-big $\mathbb{Q}$-Cartier $\mathbb{Q}$-divisor $A+B$, 
where $A \geq 0$ is a general ample $\mathbb{Q}$-divisor over $U$ and $B \geq 0$, 
such that $(X, A+B)$ is klt and $\Delta- (1/l')\llcorner \Delta \lrcorner \sim_{\mathbb{Q}, \, U} A+B$. 
In particular, $(X, B)$ is klt. 
Furthermore, $A+(l'-1)(K_{X}+\Delta- (1/l')\llcorner \Delta \lrcorner)$ is ample over $U$.
Thus we have
$$l'(K_{X}+\Delta)-\llcorner \Delta \lrcorner \sim_{\mathbb{Q}, \, U} 
K_{X}+A+(l'-1)(K_{X}+\Delta- \frac{1}{l'}\llcorner \Delta \lrcorner)+B$$
and $R^{1}\pi_{*}\mathcal{O}_{X}(l'(K_{X}+\Delta)-\llcorner \Delta \lrcorner)=0$ 
(cf.~\cite[Theorem 1-2-5]{kawamata-matsuda-matsuki}). 
Then $\pi_{*} \mathcal{O}_{X}(l'(K_{X}+\Delta)) \to 
\pi_{*}\mathcal{O}_{\llcorner \Delta \lrcorner}(l'(K_{\llcorner \Delta \lrcorner}+D))$ is surjective
and thus
$\pi^{*} \pi_{*} \mathcal{O}_{X}(l'(K_{X}+\Delta))\otimes \mathcal{O}_{\llcorner \Delta \lrcorner} \to 
\pi^{*}\pi_{*}
\mathcal{O}_{\llcorner \Delta \lrcorner}(l'(K_{\llcorner \Delta \lrcorner}+D))$ is surjective. 

We can check that the pair $(\llcorner \Delta \lrcorner, D)$ is semi log canonical. 
Indeed, since $(X, \Delta-\epsilon \llcorner \Delta \lrcorner)$ is klt and since $X$ is $\mathbb{Q}$-factorial, by 
\cite[Corollary 5.25]{kollar-mori}, $\llcorner \Delta \lrcorner$ is Cohen--Macaulay. 
In particular, $\llcorner \Delta \lrcorner$ satisfies the $S_{2}$ condition. 
Moreover, since $(X, \Delta)$ is lc,
$\llcorner \Delta \lrcorner$ is normal crossing in codimension one. 
We also see that $D$ does not contain any irreducible component of 
$\mathcal{C}_{\llcorner \Delta \lrcorner}$ by \cite[16.6 Proposition]{corti}. 
Therefore $(\llcorner \Delta \lrcorner, D)$ is semi log canonical by \cite[17.2 Theorem]{kollar}. 
Since $K_{\llcorner \Delta \lrcorner}+D=(K_{X}+\Delta)\!\! \mid _{\llcorner \Delta \lrcorner}$ is nef over $U$, 
$K_{\llcorner \Delta \lrcorner}+D$ is semi-ample over $U$ by \cite[Theorem 1.4]{haconxu} and 
the relative abundance theorem for log canonical $(n-1)$-folds.

By these facts, in the following diagram, 
$$
\xymatrix{
\pi^{*} \pi_{*} \mathcal{O}_{X}(l'(K_{X}+\Delta))\otimes \mathcal{O}_{\llcorner \Delta \lrcorner}\ar[d] \ar[r]&
\pi^{*}\pi_{*}
\mathcal{O}_{\llcorner \Delta \lrcorner}(l'(K_{\llcorner \Delta \lrcorner}+D))\ar[d] \\
\mathcal{O}_{X}(l'(K_{X}+\Delta))\otimes \mathcal{O}_{\llcorner \Delta \lrcorner}\ar[r]^{\quad \cong}&
\mathcal{O}_{\llcorner \Delta \lrcorner}(l'(K_{\llcorner \Delta \lrcorner}+D))
}
$$
the right vertical morphism and the upper horizontal morphism are both surjective. 
Furthermore, the lower horizontal morphism is an isomorphism. 
Therefore the left vertical morphism is surjective. 
Then $\pi^{*}\pi_{*}\mathcal{O}_{X}(l'(K_{X}+\Delta)) \to \mathcal{O}_{X}(l'(K_{X}+\Delta))$ 
is surjective in a neighborhood of $\llcorner \Delta \lrcorner$. 

Therefore, $\pi^{*}\pi_{*}\mathcal{O}_{X}(l(K_{X}+\Delta)) \to \mathcal{O}_{X}(l(K_{X}+\Delta))$ 
is surjective for some sufficiently large and divisible positive integer $l$.
So we are done. 
\end{proof}
Thus, in both case, $K_{X}+\Delta$ is semi-ample over $U$. 
Therefore we complete the proof.
\end{proof}

\section{Minimal model program in dimension four}\label{sec4}

In this section, we discuss the log minimal model for log canonical $4$-folds and 
prove Theorem \ref{thm1.2}, Corollary \ref{thm1.8} and Corollary \ref{thm1.3}. 

\begin{proof}[Proof of Theorem \ref{thm1.2}]
It immediately follows from Theorem \ref{thm1.1} since the relative abundance theorem for log canonical 
3-folds holds (cf.~\cite[Theorem A.2]{fujino1}). 
\end{proof}

\begin{prop}\label{prop4.1}
Let $\pi :X \to U$ be a projective morphism of normal quasi-projective varieties, 
where the dimension of $X$ is four. 
Let $(X, \Delta)$ be a log canonical pair and let $A$ be an effective $\mathbb{R}$-divisor 
such that $(X, \Delta+A)$ is log canonical and $K_{X}+\Delta+A$ is $\pi$-nef. 
Then we can run the log MMP of $(X,\Delta)$ with scaling $A$ over $U$ and this log MMP with scaling terminates. 
\end{prop}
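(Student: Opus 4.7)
The plan is the standard one for log MMP with scaling: reduce to the $\mathbb{Q}$-factorial dlt case via a dlt blow-up, establish existence using lc flips in dimension four, and then prove termination by combining special termination with a klt argument.

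First, I would apply Theorem \ref{thm4.1} to $(X,\Delta+A)$, obtaining $f:Y\to X$ with $Y$ $\mathbb{Q}$-factorial such that, setting $\Delta'_{Y}= f_{*}^{-1}(\Delta+A)+\sum_{E}E$ (the sum over $f$-exceptional prime divisors), $(Y,\Delta'_{Y})$ is dlt and $K_{Y}+\Delta'_{Y}=f^{*}(K_{X}+\Delta+A)$. Writing $A_{Y}=f_{*}^{-1}A$ and $\Delta_{Y}=\Delta'_{Y}-A_{Y}$, we have $(Y,\Delta_{Y}+A_{Y})$ dlt, $A_{Y}$ effective, and $K_{Y}+\Delta_{Y}+A_{Y}$ nef over $U$. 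Since any log MMP of $(Y,\Delta_{Y})$ with scaling $A_{Y}$ induces (after contracting any surviving exceptional divisors) a log MMP of $(X,\Delta)$ with scaling $A$, we may assume from the outset that $X$ is $\mathbb{Q}$-factorial and $(X,\Delta+A)$ is dlt.

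Existence of the log MMP with scaling then follows from the cone and contraction theorems for lc pairs (cf.~\cite[Theorem 4.5.2]{fujino-book}) together with the existence of flips for lc four-folds (\cite{fujino-fin}): at each step $i\geq 0$ we set
$$\lambda_{i}=\inf\{\,t\geq 0\mid K_{X_{i}}+\Delta_{i}+tA_{i}\text{ is $\pi$-nef}\,\}\in[0,1],$$
contract a $(K_{X_{i}}+\Delta_{i})$-negative extremal ray $R_{i}$ with $(K_{X_{i}}+\Delta_{i}+\lambda_{i}A_{i})\cdot R_{i}=0$, and perform the associated divisorial contraction or flip. The sequence $\{\lambda_{i}\}$ is non-increasing.

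For termination, I would first apply special termination in dimension four for dlt MMP with scaling. Since abundance and termination of log MMP for lc three-folds are known (\cite{keel-matsuki-mckernan}, \cite{fujino1}), the restriction of the MMP to the normalizations of the lc centers of $(X_{i},\Delta_{i}+\lambda_{i}A_{i})$, which are lc three-folds, terminates. Hence after finitely many steps the flipping locus is disjoint from $\llcorner\Delta_{i}+\lambda_{i}A_{i}\lrcorner$, and the MMP proceeds in the klt locus of $(X_{i},\Delta_{i}+\lambda_{i}A_{i})$. A perturbation of the form $\Delta_{i}-\delta\llcorner\Delta_{i}\lrcorner+\delta H$, with $H$ a sufficiently general $\pi$-ample $\mathbb{R}$-divisor and $0<\delta\ll 1$, then lands us in the setting of a klt pair with $\pi$-big boundary, where termination with scaling follows from the finiteness of weak log canonical models \cite[Corollary 1.1.5]{bchm}.

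The main obstacle is the termination step in the klt range. Because neither $\Delta$ nor $A$ is assumed to be $\pi$-big, applying the BCHM finiteness of models requires introducing a small $\pi$-ample divisor via perturbation, and one must verify that the perturbed klt MMP with scaling either coincides with, or dominates, the original MMP on the finite tail under consideration; this compatibility is the delicate point of the argument.
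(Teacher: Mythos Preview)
Your proposal has two genuine gaps.

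First, the reduction to the $\mathbb{Q}$-factorial dlt case via a single dlt blow-up at the outset is insufficient. The proposition, as it is applied in Corollary~\ref{thm1.8}, asserts that \emph{every} log MMP of $(X,\Delta)$ with scaling of $A$ terminates, not merely that one such MMP exists and terminates. A dlt MMP on your $Y$ may, after contracting exceptional divisors, yield \emph{some} lc MMP on $X$, but this says nothing about an arbitrary lc MMP with scaling on $X$: there is no reason why a given sequence of extremal contractions on $X$ should arise in this way from a dlt MMP upstairs. The paper proceeds in the opposite direction. Given a putatively infinite lc MMP $X=X_1\dashrightarrow X_2\dashrightarrow\cdots$, it lifts each step $X_i\dashrightarrow X_{i+1}$ over $V_i$ to finitely many steps of a $(K_{Y_i^1}+\Psi_i^1)$-MMP over $V_i$, where $\alpha_i:Y_i^1\to X_i$ is a dlt blow-up, following the construction of \cite[Lemma~4.9.3]{fujino-book}. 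It then checks that the concatenated sequence on the $Y_i^j$ is itself a $(K_{Y_1^1}+\Psi_1^1)$-MMP with scaling of $\alpha_1^*A$ over $U$, by verifying that the scaling parameters agree, $\lambda_i^j=\lambda_i$. This step-by-step lifting is the core of the argument and is not captured by a single initial blow-up.

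Second, your termination argument in the klt range is, as you yourself note, incomplete: without bigness of $\Delta$ or $A$, the perturbation $\Delta_i-\delta\llcorner\Delta_i\lrcorner+\delta H$ changes the set of negative extremal rays, and you give no mechanism by which the BCHM-terminating MMP for the perturbed pair controls the original sequence. The paper does not attempt any such perturbation. Once the lifted sequence is identified as a $\mathbb{Q}$-factorial dlt MMP with scaling, it simply invokes \cite[Lemma~3.8]{birkar1}, which already establishes termination of log MMP with scaling for $\mathbb{Q}$-factorial dlt $4$-folds (Birkar's argument ultimately rests on the existence of log minimal models in dimension $3$, not on a bigness hypothesis). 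Your special-termination-plus-klt outline is in spirit a sketch of what goes into that lemma, but since the delicate part is precisely the one you leave open, the correct move here is to cite Birkar and concentrate the effort on the lifting step.
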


\begin{proof}
We can run the log MMP of $(X,\Delta)$ with scaling $A$ over $U$ by \cite[Remark 4.9.2]{fujino-book}. 
Therefore we only have to prove the termination of the log MMP with scaling. 

Suppose by contradiction that we get an infinite sequence of birational maps by running 
the log MMP with scaling $A$
$$(X=X_{1},\Delta=\Delta_{1},\lambda_{1}A_{1})
\dashrightarrow \cdots \dashrightarrow(X_{i},\Delta_{i},\lambda_{i}A_{i}) \dashrightarrow \cdots$$
over $U$, where $A_{i}$ is the birational transform of $A$ on $X_{i}$ and
$$\lambda_{i}={\rm inf}\{\mu \in \mathbb{R}_{\geq0} \!\mid 
K_{X_{i}}+\Delta_{i}+\mu A_{i} \;{\rm is}\;{\rm nef\; over}\;U \}$$
for every $i \geq1$. 
Let $X_{i}\to V_{i}$ be the contraction morphism of the $i$-th step of the $(K_{X}+\Delta)$-log MMP with 
scaling $A$ over $U$. 
Note that by \cite[Lemma 3.8]{birkar1}, 
the log MMP with scaling terminates for all $\mathbb{Q}$-factorial dlt $4$-folds. 
By the same argument as in the proof of \cite[Lemma 4.9.3]{fujino-book}, 
we obtain the following diagram
$$
\xymatrix@C=15pt{
(Y_{1}^{1},\Psi_{1}^{1}) \ar_{\alpha_{1}}[d] \ar@{-->}[r]&
\cdots \ar@{-->}[r]&(Y_{1}^{k_{1}}=Y_{2}^{1},\Psi_{1}^{k_{1}}=\Psi_{2}^{1})\ar_{\alpha_{2}}[d] \ar@{-->}[r]&
\cdots\ar@{-->}[r]& (Y_{i}^{1},\Psi_{i}^{1}) \ar_{\alpha_{i}}[d] \ar@{-->}[r]&
\cdots \\
(X_{1}, \Delta_{1}) \ar@{-->}[rr]&&
(X_{2}, \Delta_{2}) \ar@{-->}[r]&
\cdots\ar@{-->}[r]&(X_{i}, \Delta_{i})\ar@{-->}[r]&
\cdots
}
$$
such that 
\begin{enumerate}
\item[(1)]
$(Y_{i}^{1},\Psi_{i}^{1})$ is $\mathbb{Q}$-factorial dlt and 
$K_{Y_{i}^{1}}+\Psi_{i}^{1}=\alpha_{i}^{*}(K_{X_{i}}+\Delta_{i})$, 
\item[(2)]
the sequence of birational maps
$$(Y_{i}^{1},\Psi_{i}^{1})\dashrightarrow\cdots\dashrightarrow
(Y_{i}^{k_{i}},\Psi_{i}^{k_{i}})=(Y_{i+1}^{1},\Psi_{i+1}^{1})$$ 
is a finite number of steps of the $(K_{Y_{i}^{1}}+\Psi_{i}^{1})$-log MMP over $V_{i}$
\end{enumerate}
for any $i\geq1$, and  
\begin{enumerate}
\item[(3)]
the sequence of the upper horizontal birational maps is an
infinite sequence of divisorial contractions and log flips of the $(K_{Y_{1}^{1}}+\Psi_{1}^{1})$-log MMP over $U$.
\end{enumerate}
For every $i \geq1$ and $1\leq j <k_{i}$, let $A_{i}^{j}$ be the birational transform of $\alpha_{1}^{*}A$ on 
$Y_{i}^{j}$ and let 
$$\lambda_{i}^{j}={\rm inf}\{\mu \in \mathbb{R}_{\geq0} \!\mid 
K_{Y_{i}^{j}}+\Psi_{i}^{j}+\mu A_{i}^{j} \;{\rm is}\;{\rm nef\; over}\;U \}.$$
Then we have $\lambda_{i}^{j}=\lambda_{i}$ for any $i \geq1$ and $1\leq j<k_{i}$. 
Indeed, since $K_{X_{i}}+\Delta_{i}+\lambda_{i} A_{i}$ is nef over $U$ and it is also trivial over $V_{i}$, 
there is an $\mathbb{R}$-Cartier divisor $D$, which is nef over $U$, on $V_{i}$ such that 
$K_{X_{i}}+\Delta_{i}+\lambda_{i} A_{i}$ is $\mathbb{R}$-linearly equivalent 
to the pullback of $D$. 
Since $A_{i}^{1}=\alpha_{i}^{*}A_{i}$, by the condition (1), $K_{Y_{i}^{1}}+\Psi_{i}^{1}+\lambda_{i} A_{i}^{1}$ is also 
$\mathbb{R}$-linearly equivalent to the pullback of $D$. 
Thus $K_{Y_{i}^{1}}+\Psi_{i}^{1}+\lambda_{i} A_{i}^{1}$ is nef over $U$.
Moreover, by the condition $(2)$, $K_{Y_{i}^{j}}+\Psi_{i}^{j}+\lambda_{i} A_{i}^{j}$ is also 
$\mathbb{R}$-linearly equivalent to the pullback of $D$. 
Therefore $K_{Y_{i}^{j}}+\Psi_{i}^{j}+\lambda_{i} A_{i}^{j}$ is nef over $U$ and trivial over $V_{i}$ for any $0\leq j<k_{i}$. 
We also see that $K_{Y_{i}^{j}}+\Psi_{i}^{j}+\mu A_{i}^{j}$ is not nef over $V_{i}$ for any $\mu \in [0,\lambda_{i})$ 
by the condition $(2)$. 
In particular it is not nef over $U$. 
Therefore we have $\lambda_{i}^{j}=\lambda_{i}$ for any $i \geq1$ and $1\leq j<k_{i}$.

By these facts, we can identify the sequence of birational maps 
$$(Y_{1}^{1},\Psi_{1}^{1})\dashrightarrow\cdots \dashrightarrow (Y_{i}^{j},\Psi_{i}^{j})\dashrightarrow
(Y_{i}^{j+1},\Psi_{i}^{j+1})\dashrightarrow \cdots$$
with an infinite sequence of birational maps of the $(K_{Y_{1}^{1}}+\Psi_{1}^{1})$-log MMP with scaling $A_{1}^{1}=\alpha_{1}^{*}A$ over $U$. 
But then it must terminate by \cite[Lemma 3.8]{birkar1}.
It contradicts to our assumption. 
So we are done. 
\end{proof}

\begin{proof}[Proof of Corollary \ref{thm1.8}]
The first half of the assertions immediately 
follows from Proposition \ref{prop4.1} and Theorem \ref{thm1.2}. 
For the latter half, if $K_{X}+\Delta$ is $\pi$-pseudo-effective 
then it is $\pi$-effective by the first half of this corollary. 
By \cite[Main Theorem 1.3]{birkar0}, termination of any log MMP follows. 
So we are done. 
\end{proof}

\begin{proof}[Proof of Corollary \ref{thm1.3}]
Without loss of generality, we can assume that $U$ is affine.
Then the assertion follows from Proposition \ref{prop4.1} and Theorem \ref{thm1.2} 
with the same argument as in the proof of 
\cite[Lemma 3.2]{hashizume} and the discussion of \cite[Section 4]{hashizume}.
\end{proof}


\end{document}